\documentclass[a4paper,11pt,pdflatex]{article}

\usepackage{mathtools} 
\usepackage{amssymb,amsfonts,amsthm} 

\usepackage{graphicx}
\usepackage{tikz}
\usepackage{url}

\usepackage{ascmac}    
\usepackage{fancybox}  
\usepackage{float}     

\usepackage[shortlabels]{enumitem} 

\usepackage[setpagesize=false,bookmarks=true,bookmarksdepth=tocdepth,bookmarksnumbered=true,colorlinks=true,allcolors=blue,pdftitle={},pdfsubject={},pdfauthor={},pdfkeywords={}]{hyperref}
\usepackage[nameinlink,capitalize]{cleveref}
\usepackage{aliascnt}
\usepackage{autonum} 

\DeclareMathOperator{\Aut}{Aut}
\DeclareMathOperator{\id}{id}
\DeclareMathOperator{\col}{col}

\newcommand{\R}{\mathbb{R}}
\newcommand{\N}{\mathbb{N}}
\newcommand{\Z}{\mathbb{Z}}

\newcommand{\isomto}{\xrightarrow{\sim}}
\newcommand{\co}{\colon}

\numberwithin{equation}{section}

\theoremstyle{plain}
\newtheorem{thm}{Theorem}[section]

\newaliascnt{lem}{thm}
\newtheorem{lem}[lem]{Lemma}
\aliascntresetthe{lem}

\newaliascnt{cor}{thm}

\aliascntresetthe{cor}

\newaliascnt{prop}{thm}
\newtheorem{prop}[prop]{Proposition}
\aliascntresetthe{prop}

\newaliascnt{conj}{thm}
\newtheorem{conj}[conj]{Conjecture}
\aliascntresetthe{conj}

\newaliascnt{ques}{thm}
\newtheorem{ques}[ques]{Question}
\aliascntresetthe{ques}

\newaliascnt{fact}{thm}

\aliascntresetthe{fact}

\theoremstyle{definition}
\newaliascnt{dfn}{thm}
\newtheorem{dfn}[dfn]{Definition}
\aliascntresetthe{dfn}

\newaliascnt{prob}{thm}

\aliascntresetthe{prob}

\newaliascnt{ex}{thm}
\newtheorem{ex}[ex]{Example}
\aliascntresetthe{ex}

\newaliascnt{obs}{thm}

\aliascntresetthe{obs}

\theoremstyle{remark}
\newaliascnt{rem}{thm}
\newtheorem{rem}[rem]{Remark}
\aliascntresetthe{rem}

\crefname{equation}{Equation}{Equations}
\crefname{figure}{Figure}{Figures}
\crefname{table}{Table}{Tables}
\crefname{section}{Section}{Sections}
\crefname{subsection}{Subsection}{Subsections}
\crefname{appendix}{Appendix}{Appendices}

\crefname{thm}{Theorem}{Theorems}
\crefname{lem}{Lemma}{Lemmas}
\crefname{cor}{Corollary}{Corollaries}
\crefname{prop}{Proposition}{Propositions}
\crefname{conj}{Conjecture}{Conjectures}
\crefname{fact}{Fact}{Facts}
\crefname{dfn}{Definition}{Definitions}
\crefname{prob}{Problem}{Problems}
\crefname{ex}{Example}{Examples}
\crefname{obs}{Observation}{Observations}
\crefname{rem}{Remark}{Remarks}

\crefformat{equation}{Equation~(#2#1#3)}
\crefrangeformat{equation}{Equations~(#3#1#4)--(#5#2#6)}
\crefmultiformat{equation}{Equations~(#2#1#3)}{, (#2#1#3)}{, (#2#1#3)}{, (#2#1#3)}

\allowdisplaybreaks

\setlist[enumerate,1]{label=(\arabic*)}
\setlist[enumerate,2]{label=(\alph*)}
\setlist[enumerate,3]{label=(\roman*)}
\setlist[enumerate,4]{label=(\Alph*)}

\begin{document}

\title{A Non-Multiplicable Upho Poset Constructed from the Petersen Graph}
\author{Ryunosuke Matsuoka}
\date{\today}
\maketitle

\begin{abstract}
An upper homogeneous (upho) poset is a poset whose every principal filter is
isomorphic to the whole poset. Fu--Peng--Zhang conjectured that every finitary
upho poset admits a compatible left-cancellative, invertible-free monoid
structure whose left-divisibility order coincides with the given order.

We disprove this conjecture. For every vertex-transitive graph $G$, we construct
a finitary upho poset $P(G,v_0)$ from walks starting at a
fixed vertex $v_0$. Applying this construction to the Petersen graph, we show
that multiplicability of $P(G,v_0)$ would force the automorphism group of $G$ to
contain a regular subgroup. This would imply that $G$ is a Cayley graph, contradicting the fact that the Petersen graph is
not Cayley. Hence $P(G,v_0)$ is a non-multiplicable finitary upho poset.

We also show that the analogous poset associated with the line graph of the
Petersen graph is multiplicable, demonstrating that non-Cayleyness of the
underlying graph alone does not determine multiplicability.
\end{abstract}

\tableofcontents

\newpage

\section{Introduction}

A poset $P$ is called \textbf{upho (upper homogeneous)} if for any element $a \in P$, the principal filter $U(a) := \{ x \in P \mid x \geq a \}$ is order-isomorphic to $P$. 
This class of infinite posets was first introduced by Richard Stanley in his talks (see, e.g., \cite{Sta20}) related to his research on Stern's diatomic array, an array of integers analogous to Pascal's triangle.
The first detailed study of upho posets was conducted by Gao, Guo, Seetharaman, and Seidel \cite{GGSS22}, who primarily investigated their rank-generating functions. Notably, their paper also introduced the idea of constructing upho posets from left-cancellative, invertible-free (LCIF) monoids. 
This monoid-theoretic perspective was subsequently expanded upon by Fu, Peng, and Zhang \cite{FPZ24}. An interesting conjecture regarding the structure of upho posets was proposed in their work. This conjecture states that when an upho poset $P$ is finitary, $P$ can be endowed with a monoid structure such that its left divisibility relation $a \leq b \iff \exists c, a * c = b$ coincides with the partial order of $P$. 
This conjecture was originally proposed by Fu--Peng--Zhang. Independently, Hidetaka Inagaki arrived at the same conjecture, which motivated the author to initiate this project.

The purpose of this paper is to refute this conjecture by providing a concrete counterexample based on reachability in a vertex-transitive graph. Let $G$ be the Petersen graph, and define the set $P$ as
\begin{align}
    P := \{ (v,n) \in V(G) \times \N \mid \text{there is a walk of length $n$ from $v_0$ to $v$} \}.
\end{align}
Furthermore, we define a partial order $\leq$ on $P$ by
\begin{align}
    (v,n) \leq (w,m) \Leftrightarrow \text{$n \leq m$ and there is a walk of length $m-n$ from $v$ to $w$}.
\end{align}
Then, $(P, \leq)$ is a finitary upho poset. If $(P, \leq)$ were multiplicable (i.e., admitted a compatible monoid structure), then for any $a = (u,n) \in P$, the multiplication by $a$ can be expressed as follows:
\begin{align}
    a * (v,m) = (\phi_{a,m}(v), m + n)
\end{align}
Using a rigidity property of the Petersen graph (namely, that two vertex bijections preserving adjacency in the sense $v \sim w \iff \phi(v) \sim \psi(w)$ must coincide), we show that
$\phi_{a,m}$ is independent of $m$ and is the restriction of an automorphism $\phi_a$ of $G$.
Using this $\phi_a$, we construct a subgroup $H$ of the automorphism group $\Aut(G)$ that acts regularly on the vertex set $V(G)$. 
This would imply that $G$ is a Cayley graph. However, since the Petersen graph is not a Cayley graph, $(P, \leq)$ is not multiplicable.

For any vertex-transitive graph $G$, an upho poset can be constructed in a similar manner. This method provides a rich family of concrete examples of finitary upho posets and is expected to reveal many connections among posets, monoids, and graphs.
Furthermore, we discover that the upho poset constructed from the line graph of the Petersen graph is indeed multiplicable. This shows that the non-Cayleyness of the underlying graph alone does not determine the multiplicability of the associated upho poset.

Finally, we note that our counterexample constructed from the Petersen graph is not a lattice. 
While the Fu--Peng--Zhang conjecture fails for general finitary upho posets, the analogue for upho lattices remains open, which we formally state as \cref{conj:upho-lattice-multiplicable} along with its connection to the classification theory by Hopkins and Lewis.

\section{Preliminaries}

\subsection{Upho Posets and LCIF Monoids}

A monoid is an algebraic structure $(M, *, e)$ consisting of a set, an associative binary operation $*$, and an identity element $e$ (satisfying $e*a = a*e = a$).

\begin{dfn}
    A monoid $(M, *, e)$ is called \textbf{LCIF (left-cancellative invertible-free)} if it satisfies the following conditions:
    \begin{enumerate}
        \item For any $a, b, c \in M$, $a*b = a*c$ implies $b = c$ (left-cancellative).
        \item For any $a, b \in M$, $a*b = e$ implies $a = b = e$ (invertible-free).
    \end{enumerate}
\end{dfn}

\begin{ex}
    \begin{enumerate}
        \item The set of non-negative integers $\N$ and its direct product $\N^n$ are LCIF monoids under usual addition.
        \item $\Z_{>0}$ is an LCIF monoid under multiplication.
        \item The free monoids generated by any set are LCIF monoids.
        \item The non-commutative monoid defined by generators $a,b,c$ and the relation $ac=bc$ is an LCIF monoid, but it is not right-cancellative.
    \end{enumerate}
\end{ex}

A characteristic of LCIF monoids is that the \textbf{left divisibility relation} on $M$,
\begin{align}
    a \leq b \iff \exists c \in M, a * c = b
\end{align}
defines a poset (partially ordered set). That is, for any $a, b, c \in M$, the following conditions hold:
\begin{enumerate}
    \item $a \leq a$.
    \item If $a \leq b$ and $b \leq c$, then $a \leq c$.
    \item If $a \leq b$ and $b \leq a$, then $a = b$.
\end{enumerate}
Furthermore, this order has the following property.

\begin{dfn}
    A poset $(P, \leq)$ is \textbf{upho (upper homogeneous)} if for any $a \in P$, there exists an order isomorphism:
    \begin{align}
        \psi_a \co P \isomto U(a) := \{ x \in P \mid x \geq a \}.
    \end{align}
\end{dfn}

\begin{ex}
    \begin{enumerate}
        \item $\N$ and $\N^n$ are upho posets under the usual order and the product order, respectively.
        \item $\Z_{>0}$ is an upho poset under the divisibility relation.
        \item The infinite binary tree is an upho poset; it is the left-divisibility poset of the free monoid generated by two elements.
        \item $\Z$ is not an upho poset under the usual order.
    \end{enumerate}
\end{ex}

Note that an upho poset automatically has a minimum element. 

\begin{dfn}
    An upho poset $P$ is \textbf{multiplicable} if there exists an LCIF monoid structure on $P$ such that the left divisibility relation coincides with the partial order of $P$.
    In this case, the identity element of the monoid is automatically the minimum element of $P$.
\end{dfn}

Let $P$ be a poset. For $x, y \in P$, we write $x \lessdot y$ if $x < y$ and there is no element $z$ such that $x < z < y$ (i.e., $y$ covers $x$).
For $a \in P$, its height is defined as
\begin{align}
    \mathrm{ht}(x) := \sup \{ n \in \N \mid \text{there exists a chain } x_0 < x_1 < \cdots < x_n = a \}.
\end{align}
$P$ is called \textbf{finitary} if for any $a \in P$, $\mathrm{ht}(a) < \infty$. Additionally, a finitary poset $P$ with a minimum element $e$
is \textbf{$\N$-graded} if there exists a map $\rho \co P \to \N$ such that 
\begin{align}
    x \lessdot y \Rightarrow \rho(x) + 1 = \rho(y), \quad \rho(e) = 0.
\end{align}
$\rho(x)$ is called the \textbf{rank} of $x$.
An $\N$-graded poset $P$ is \textbf{finite-type} if for any $n \in \N$, the set $P_n := \{ x \in P \mid \rho(x) = n \}$ is finite.
We say that an LCIF monoid $M$ is an \textbf{LCH (left-cancellative homogeneous)} if its left divisibility relation gives $\N$-graded upho poset.

\subsection{Colorings of Upho Posets}

One of the key concepts in \cite{FPZ24} is the notion of a \textbf{coloring} of an upho poset, which serves as a combinatorial tool to characterize multiplicability. 
We don't need this notion to construct a counterexample, but the original conjecture was formulated in terms of colorings, so we introduce it here.

For a finitary upho poset $P$,
\begin{align}
    E_P := \{ (x,y) \in P \times P \mid x \lessdot y \}
\end{align}
is the set of covering relations in $P$, and
\begin{align}
    A_P := \{ a \in P \mid e \lessdot a \}
\end{align}
is the set of \textbf{atoms} in $P$. 

\begin{dfn}
    Let $P$ be a finitary upho poset. A map
    \begin{align}
        \col\co E_P \to A_P
    \end{align}
    is called a \textbf{coloring} of $P$ if it satisfies the following conditions:
    \begin{enumerate}
        \item For any atom $a\in A_P$,
        \begin{align}
            \col(e,a)=a.
        \end{align}
        \item For any $s\in P$, there exists an order isomorphism
        \begin{align}
            \psi_s \co P \isomto U(s)
        \end{align}
        such that, for every covering $(x,y)\in E_P$,
        \begin{align}
            \col(x,y) = \col(\psi_s(x), \psi_s(y)).
        \end{align}
    \end{enumerate}
    It can be shown that such a $\psi_s$ is unique for each $s$.
\end{dfn}

\begin{prop}\label{prop:regular_multiplicable}
    Let $P$ be a finitary upho poset. Then $P$ admits a coloring if and only if
    $P$ is multiplicable.
\end{prop}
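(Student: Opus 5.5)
We prove both directions by translating between left multiplication by $s$ and the isomorphisms $\psi_s$. For ``multiplicable $\Rightarrow$ coloring'', fix an LCIF monoid structure $(P,*,e)$ whose left-divisibility order is $\leq$. For each $s$, the map $\psi_s(x):=s*x$ is an order isomorphism $P\isomto U(s)$. It is surjective by the definition of left divisibility. It is injective by left-cancellativity. It reflects and preserves order: if $x\le y$, say $x*c=y$, then $s*x*c=s*y$. Conversely, if $s*x*c=s*y$, cancellation gives $x*c=y$.

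Next, for a covering $x\lessdot y$ we write $y=x*a$ and claim $a$ is an atom. If $a=e$ then $x=y$, which is impossible. If $e<b<a$, then $x\le x*b\le x*a$, and $\psi_x$ is an isomorphism, so $x<x*b<y$, contradicting the covering. The element $a$ is unique by left-cancellation. We set $\col(x,y):=a$. Then $\col(e,a)=a$ for every atom $a$. Also, if $y=x*a$ then $s*y=(s*x)*a$, so $\col(\psi_s(x),\psi_s(y))=a=\col(x,y)$. This is a coloring.

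For ``coloring $\Rightarrow$ multiplicable'', fix a coloring and, for each $s$, the isomorphism $\psi_s$ from the definition. The paper asserts that $\psi_s$ is unique; we prove this here by induction on height. Let $\psi,\psi'$ both be color-preserving isomorphisms $P\isomto U(s)$. They agree at $e$, since both send the minimum to $s$. Suppose they agree at $x$, and let $x\lessdot y$. Then $\psi(y)$ and $\psi'(y)$ both cover $\psi(x)$, and $\col(\psi(x),\psi(y))=\col(x,y)=\col(\psi(x),\psi'(y))$. We need that distinct upper covers of a single element carry distinct colors. This holds because $\psi_x$ maps the covers of $e$ bijectively onto the covers of $x$, preserving colors, and $\col(e,a)=a$ distinguishes the atoms. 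Finitariness ensures every element is reached by a finite chain of coverings from $e$, so $\psi=\psi'$.

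Define $s*t:=\psi_s(t)$. We verify the axioms as follows.
\begin{enumerate}
\item The element $e$ is a two-sided identity. Indeed, $\psi_e$ is a color-preserving automorphism, and by uniqueness it equals $\id$, so $e*t=t$. Also $s*e=\psi_s(e)=s$.
\item Associativity: $\psi_s\circ\psi_t$ is a color-preserving isomorphism $P\isomto U(s*t)$, because $\psi_s$ maps $U(t)$ onto $U(\psi_s(t))$. By uniqueness $\psi_s\circ\psi_t=\psi_{s*t}$, which gives $(s*t)*u=s*(t*u)$.
\item Left-cancellativity follows from injectivity of $\psi_s$.
\item Invertible-freeness: $s*t=e$ forces $s\le e$, hence $s=e$, and then $t=e$.
\item The left-divisibility order is $\leq$, because the image of $\psi_s$ is exactly $U(s)$.
\end{enumerate}

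The main obstacle is the uniqueness of $\psi_s$, in particular the fact that the colors on the upward covers of any element are distinct. That fact is what carries both associativity and the identity law. We handle it as above by transporting the atom labeling via $\psi_x$ and then inducting on height, which finitariness makes legitimate.
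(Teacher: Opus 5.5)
Your proof is correct and follows the same route as the paper: in one direction you define $s*t:=\psi_s(t)$ and get associativity and the identity law from uniqueness of color-preserving isomorphisms, and in the other you color each covering $x\lessdot y$ by the unique $a$ with $x*a=y$. You also fill in two details the paper only asserts, both of which check out: that this $a$ is actually an atom, and that $\psi_s$ is unique (upper covers of any element receive distinct colors, then you induct along saturated chains from $e$, which exist because $P$ is finitary).
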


\begin{proof}
    This is essentially a reformulation of \cite[Theorem 5]{FPZ24}.

    Suppose first that $P$ admits a coloring. For each $s \in P$, let
    $\psi_s \co P \isomto U(s)$ be the color-preserving isomorphism. Define a binary operation on $P$ by
    \begin{align}
        s * t := \psi_s(t).
    \end{align}
    Since $\psi_e=\id_P$ and $\psi_s(e)=s$, the element $e$ is the identity for
    $*$. Moreover, both $\psi_{r*s}$ and $\psi_r\circ \psi_s$ are
    color-preserving isomorphisms $P \isomto U(r*s)$. By uniqueness, they are
    equal. Hence
    \begin{align}
        (r*s)*t = \psi_{r*s}(t) = \psi_r(\psi_s(t)) = r*(s*t),
    \end{align}
    so $(P,*,e)$ is a monoid.

    This monoid is left-cancellative, since
    \begin{align}
        s*x=s*y \implies \psi_s(x)=\psi_s(y) \implies x=y.
    \end{align}
    It is also invertible-free: if $x*y=e$, then $e\in U(x)$, hence $x\leq e$,
    so $x=e$, and then $y=e$. Finally, if $s \leq u$, then $u \in U(s)$, so $u=\psi_s(\psi_s^{-1}(u)) = s*\psi_s^{-1}(u)$. 
    Conversely, if $u=s*t$ for some $t$, then $u \in U(s)$, so $s \leq u$. Thus the original order coincides with the left divisibility order, and
    $P$ is multiplicable.

    Conversely, suppose that $P$ is multiplicable. Thus $P$ has an LCIF monoid
    structure $(P,*,e)$ whose left divisibility order is the given order. For
    each $s\in P$, define
    \begin{align}
        \psi_s \co P \to U(s),\qquad \psi_s(t):=s*t.
    \end{align}
    This map is bijective because every element of $U(s)$ is uniquely of the
    form $s*t$ by left divisibility and left-cancellativity. It is also an order
    isomorphism. For a covering edge $(x,y)\in E_P$, there is a unique atom $a \in A_P$ such that 
    \begin{align}
        x * a = y.
    \end{align}
    Define
    \begin{align}
        \col(x,y):=a.
    \end{align}
    Then $\col(e,a)=a$ for every atom $a$.

    It remains to check compatibility. Let $(x,y)\in E_P$ and write
    \begin{align}
        x*a=y,\qquad a=\col(x,y).
    \end{align}
    Then
    \begin{align}
        \psi_s(x)*a = (s*x)*a = s*(x*a) = s*y = \psi_s(y).
    \end{align}
    Hence
    \begin{align}
        \col(\psi_s(x),\psi_s(y)) = a = \col(x,y).
    \end{align}
    Therefore the coloring is compatible, and $P$ admits a coloring.
\end{proof}

The aim of this paper is to construct a counterexample to the conjecture proposed in \cite{FPZ24}.

\begin{conj}[{\cite{FPZ24}}] \label{conj:main-finite-type}
    Every finitary upho poset, in particular every finite-type $\N$-graded upho poset, has a coloring.
\end{conj}

This statement proposed as Conjecture 3.12 and Conjecture 7.1 in the preprint version \cite{FPZ24a},
 which corresponds to the surjectivity of the forgetful mapping $\mathfrak{F}$ in Section 3.3 of the FPSAC version \cite{FPZ24}.
By \cref{prop:regular_multiplicable}, this conjecture is equivalent to the following statement.

\begin{conj}
    Every finitary $\N$-graded upho poset, in particular every finite-type $\N$-graded upho poset, is multiplicable.
\end{conj}

\subsection{Graph Theory}

A \textbf{graph} $G = (V, E)$ is a pair consisting of a finite set $V$ and a set $E \subset \binom{V}{2}$. Elements of $V$ are called vertices, and elements of $E$ are called edges. When $\{u,v\} \in E$, we write $u \sim v$.

An \textbf{isomorphism} between graphs $G$ and $H$ is a bijection $\sigma \co V(G) \isomto V(H)$ such that for any $u, v \in V(G)$, $u \sim v \iff \sigma(u) \sim \sigma(v)$. In particular, an isomorphism from $G$ to itself is called an \textbf{automorphism}, and the set of all automorphisms is denoted by $\Aut(G)$. $G$ is \textbf{vertex-transitive} if $\Aut(G)$ acts transitively on $V$.

A \textbf{walk} of length $n$ from vertex $v$ to $w$ is a sequence of vertices
\begin{align}
    v = v_0 \sim v_1 \sim \cdots \sim v_n = w.
\end{align}
$G$ is \textbf{connected} if for any vertices $v, w$, there exists a walk from $v$ to $w$.

For a vertex $v$, 
\begin{align}
    N_G(v) := \{ w \in V \mid w \sim v \}
\end{align}
is called the \textbf{neighborhood} of $v$. When $N_G(v) = N_G(w) \Rightarrow v = w$ holds, $G$ is called \textbf{twin-free}.
For a graph $G = (V,E)$, the matrix $A = (a_{uv})_{u,v \in V}$ indexed by $V$ is defined as
\begin{align}
    a_{uv} = \begin{cases}
        1 & u \sim v \\ 0 & \text{otherwise.}
    \end{cases}
\end{align}
This matrix $A$ is called the \textbf{adjacency matrix} of $G$.

A graph $G$ is \textbf{bipartite} if there exists a partition $V = V_1 \sqcup V_2$ of the vertex set such that no two vertices within $V_1$ or within $V_2$ are adjacent.

\begin{ex}\label{ex:petersen}
    Let $X := \{1,2,3,4,5\}$, and define a graph $G = (V,E)$ by
    \begin{align}
        V = \binom{X}{2}, \quad u \sim v \iff u \cap v = \emptyset.
    \end{align}
    Then $G$ is a connected, vertex-transitive graph that is twin-free and not bipartite. This graph $G$ is called the \textbf{Petersen graph}.
\end{ex}

\begin{figure}[H]
    \centering
    \begin{tikzpicture}[
        vertex/.style={
            circle, 
            draw=blue!70!black, 
            fill=blue!5, 
            thick,
            minimum size=26pt, 
            inner sep=0pt, 
            font=\small
        },
        edge/.style={
            thick, 
            draw=gray!70!black
        },
        scale=0.8
    ]

        \node[vertex] (u0) at (90:3.8)  {$\{1,2\}$};
        \node[vertex] (u1) at (162:3.8) {$\{3,4\}$};
        \node[vertex] (u2) at (234:3.8) {$\{1,5\}$};
        \node[vertex] (u3) at (306:3.8) {$\{2,3\}$};
        \node[vertex] (u4) at (18:3.8)  {$\{4,5\}$};

        \node[vertex] (v0) at (90:1.9)  {$\{3,5\}$};
        \node[vertex] (v1) at (162:1.9) {$\{2,5\}$};
        \node[vertex] (v2) at (234:1.9) {$\{2,4\}$};
        \node[vertex] (v3) at (306:1.9) {$\{1,4\}$};
        \node[vertex] (v4) at (18:1.9)  {$\{1,3\}$};

        \draw[edge] (u0) -- (u1);
        \draw[edge] (u1) -- (u2);
        \draw[edge] (u2) -- (u3);
        \draw[edge] (u3) -- (u4);
        \draw[edge] (u4) -- (u0);

        \draw[edge] (v0) -- (v2);
        \draw[edge] (v2) -- (v4);
        \draw[edge] (v4) -- (v1);
        \draw[edge] (v1) -- (v3);
        \draw[edge] (v3) -- (v0);

        \draw[edge] (u0) -- (v0);
        \draw[edge] (u1) -- (v1);
        \draw[edge] (u2) -- (v2);
        \draw[edge] (u3) -- (v3);
        \draw[edge] (u4) -- (v4);

    \end{tikzpicture}
    \caption{The Petersen graph} \label{fig:petersen}
\end{figure}

\subsection{Cayley Graphs}

\begin{dfn}
    Let $H$ be a finite group and $S \subset H$ be a generating set of $H$ such that
    \begin{align}
        e \notin S, \quad S = S^{-1}.
    \end{align}
    The graph $\mathrm{Cay}(H,S) = (V,E)$ is defined as
    \begin{align}
        V := H, \quad E := \{\{h, hs\} \mid h \in H, s \in S\}.
    \end{align}
    Then $\mathrm{Cay}(H,S)$ (or graphs isomorphic to it) is called the \textbf{Cayley graph} of the group $H$ with respect to the generating set $S$. Since $S$ is a generating set, $\mathrm{Cay}(H,S)$ is connected. It is also vertex-transitive.
\end{dfn}

The following characterization of Cayley graphs has long been known.

\begin{prop}[Sabidussi's Theorem, \cite{Sa58}] \label{prop:sabidussi}
    A graph $G$ is isomorphic to a Cayley graph of a group $H$ if and only if there exists a regular action of $H$ on $G$ (i.e., a regular subgroup $H \le \Aut(G)$).
\end{prop}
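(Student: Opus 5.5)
The statement just given is Sabidussi's Theorem (\cref{prop:sabidussi}), and I will prove both directions directly by identifying vertices with group elements.

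\emph{Forward direction.} Suppose $G \cong \mathrm{Cay}(H,S)$. It suffices to exhibit a regular subgroup of $\Aut(\mathrm{Cay}(H,S))$; conjugating by the isomorphism then transports it to $\Aut(G)$. For $g \in H$, let $\lambda_g \co H \to H$ be left multiplication, $\lambda_g(h) = gh$. It preserves edges, since $\{h,hs\} \mapsto \{gh, (gh)s\}$, and its inverse $\lambda_{g^{-1}}$ does too, so $\lambda_g$ is an automorphism. The map $g \mapsto \lambda_g$ is an injective homomorphism $H \to \Aut(\mathrm{Cay}(H,S))$. Its image acts regularly: for any $h,k$, the unique $g$ with $\lambda_g(h)=k$ is $g = kh^{-1}$.

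\emph{Converse direction.} Let $H \le \Aut(G)$ act regularly on $V$, and fix a base vertex $v_0$. The orbit map $\beta \co H \to V$, $\beta(h) = h(v_0)$, is a bijection by regularity. Define
\begin{align}
    S := \{ s \in H \mid s(v_0) \sim v_0 \}.
\end{align}
I will check the properties of $S$ in turn.
\begin{enumerate}
    \item $e \notin S$, because $G$ has no loops.
    \item $S = S^{-1}$: if $s(v_0) \sim v_0$, then applying the automorphism $s^{-1}$ gives $v_0 \sim s^{-1}(v_0)$.
    \item $\beta$ is a graph isomorphism $\mathrm{Cay}(H,S) \to G$. Indeed, $h(v_0) \sim k(v_0)$ holds iff $v_0 \sim h^{-1}k(v_0)$, by applying $h^{-1}$. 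This holds iff $h^{-1}k \in S$, i.e.\ iff $k = hs$ for some $s \in S$.
\end{enumerate}

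The only delicate point is that the paper's definition of a Cayley graph requires $S$ to generate $H$, which is equivalent to connectedness. The paper applies the theorem to connected graphs such as the Petersen graph, so I assume $G$ is connected. Then $\mathrm{Cay}(H,S) \cong G$ is connected, and a walk from $e$ to an arbitrary $h$ writes $h = s_1 \cdots s_n$ with $s_i \in S$. Hence $\langle S \rangle = H$.

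Without connectedness, the converse holds only in the weaker sense that $G \cong \mathrm{Cay}(H,S)$ with $S$ possibly non-generating, and I would state the result with that caveat. This is the only step needing care; everything else is routine verification.
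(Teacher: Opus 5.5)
Your proof is correct and takes the same route as the paper. The forward direction uses the left-regular action, and the converse uses the connection set $S = \{s \in H \mid s(v_0) \sim v_0\}$ with the orbit map $h \mapsto h(v_0)$ as the isomorphism; you supply the routine verifications that the paper's sketch omits. Your connectedness caveat is also right: the paper's definition of a Cayley graph requires $S$ to generate $H$, so the converse as stated needs $G$ connected. For example, the edgeless graph on two vertices has a regular automorphism group but is not a Cayley graph in that sense. This is harmless here, because the result is only applied to the connected Petersen graph.
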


\begin{proof}
    $\mathrm{Cay}(H,S)$ has a natural regular action by $H$. Conversely, when a graph $G$ admits a regular action of a group $H$, if we define $S := \{ s \in H \mid s \cdot v_0 \sim v_0 \}$, then
    \begin{align}
        \mathrm{Cay}(H,S) \isomto G, \quad h \mapsto h \cdot v_0
    \end{align}
    is a graph isomorphism.
\end{proof}

\begin{prop}[{\cite{God80}, \cite[Lemma 3.1.3]{GR01}}]\label{prop:petersen_is_not_cayley}
    The Petersen graph is not a Cayley graph.
\end{prop}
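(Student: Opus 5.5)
By \cref{prop:sabidussi}, it suffices to show that $\Aut(G)$ has no subgroup acting regularly on the $10$ vertices of the Petersen graph $G$. Such a subgroup $H$ would have order $10$, so $H$ is either $\Z/10$ or the dihedral group $D_5$. I will use the model of \cref{ex:petersen}, where $\Aut(G) \cong S_5$ acts on $2$-subsets of $X=\{1,\dots,5\}$. That every automorphism comes from $S_5$ is standard; I will quote it, or note that we only need the fact that $S_5$ acts faithfully by automorphisms together with $|\Aut(G)| = 120$. So the task is to rule out subgroups of order $10$ in $S_5$ that act regularly on $\binom{X}{2}$.

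First, $S_5$ has no element of order $10$: an element of order $10$ would need cycle type containing both a $5$-cycle and a $2$-cycle, which requires $7$ points. So $H \not\cong \Z/10$, and $H \cong D_5$. Next, a subgroup of order $10$ in $S_5$ contains a Sylow $5$-subgroup $\langle c\rangle$ with $c$ a $5$-cycle. Its involutions normalize $\langle c\rangle$, so they lie in $N_{S_5}(\langle c\rangle)$, the Frobenius group of order $20$. The involutions there have cycle type $(2,2)$, for instance $(2\,5)(3\,4)$ when $c=(1\,2\,3\,4\,5)$. The odd elements of that normalizer are $4$-cycles, not involutions.

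The key step is that an involution $t$ of type $(a\,b)(c\,d)$ fixes the vertex $\{a,b\}$. Hence $H$ has a non-identity element with a fixed point, so $H$ does not act regularly (regularity requires free action). This contradicts the assumption, so no regular subgroup exists, and $G$ is not Cayley by \cref{prop:sabidussi}.

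The main obstacle is justifying $\Aut(G) \cong S_5$, or at least that every automorphism is induced from $S_5$. Alternatively, one can avoid this entirely with a direct argument. Suppose $H$ is regular of order $10$. Then $H$ has a unique subgroup $K$ of order $5$, which is normal of index $2$. Its two orbits, each of size $5$, partition $V$, and each orbit induces a vertex-transitive $5$-vertex subgraph of the cubic graph $G$. Since the girth of $G$ is $5$, such a subgraph cannot contain a triangle or a $4$-cycle, so each orbit induces a $5$-cycle or is edgeless. The edgeless case is impossible by degree counting (it would make $G$ bipartite, but $G$ has $5$-cycles). So $V$ splits into two induced $5$-cycles joined by a perfect matching that commutes with the $K$-action, giving a generalized Petersen graph $GP(5,k)$. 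Finally, the involutions of $H$ swap the two orbits while inverting $K$, which forces the two cycles to have the same "step". In the Petersen graph the inner cycle has step $2$ relative to the outer cycle, as in \cref{fig:petersen}, and the orientation/step bookkeeping should show the steps $1$ and $2$ cannot be exchanged by a map normalizing $K$. I would present the $S_5$ argument and cite the automorphism group, keeping this combinatorial route as a backup.
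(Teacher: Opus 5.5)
Your main argument is correct, but it reaches the contradiction by a different route from the paper.

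The paper quotes $\Aut(G)\cong\mathfrak{S}_5$ and also quotes that the six subgroups of order $10$ in $\mathfrak{S}_5$ are all conjugate. It then reduces to $H=\langle(1\,2\,3\,4\,5),(1\,4)(2\,3)\rangle$ and shows this $H$ fails to be \emph{transitive}, since both generators preserve the outer pentagon $O_1$. You instead show that $H$ fails to act \emph{freely}: it contains an involution, and that involution fixes a $2$-subset. This avoids the conjugacy classification entirely.

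It can be streamlined further. By Cauchy's theorem any group of order $10$ contains an involution, and every involution of $\mathfrak{S}_5$, whether $(a\,b)$ or $(a\,b)(c\,d)$, fixes the vertex $\{a,b\}$. So neither the exclusion of $\Z/10$ nor the normalizer computation is needed. The paper's own $\tau$ visibly fixes $\{2,3\}$, so the same phenomenon is already present there.

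The paper's route has the merit of exhibiting the obstruction through the inner/outer pentagon structure of \cref{fig:petersen}, which is also what your backup argument exploits. That backup is worthwhile because it avoids $\Aut(G)\cong\mathfrak{S}_5$ altogether, but its final sentence is wrong as stated. A map merely normalizing $K$ \emph{can} exchange steps $1$ and $2$: for example, $(1\,2\,4\,3)$ conjugates $(1\,2\,3\,4\,5)$ to its square and swaps the two pentagons. What actually makes the argument work is the point in your preceding sentence. Write the orbits as $x_i=c^ix_0$ and $y_i=c^iy_0$ with $x_0\sim y_0$. Since $[H:K]=2$, every element of $H\setminus K$ acts on $K$ by $c\mapsto c^{\pm1}$. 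Hence it maps $x_i\mapsto y_{j\pm i}$, which preserves step length and contradicts the two pentagons having steps $1$ and $2$.
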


\begin{proof}
    The natural action of the symmetric group $\mathfrak{S}_5$ on $V$ given by
    \begin{align}
        \mathfrak{S}_5 \curvearrowright V; \quad \sigma(\{i,j\}) = \{\sigma(i), \sigma(j)\}
    \end{align}
    induces a group homomorphism $\mathfrak{S}_5 \to \Aut(G)$. It is known that this is an isomorphism. That is,
    \begin{align}
        \Aut(G) \cong \mathfrak{S}_5.
    \end{align}
    Assume there exists a regular subgroup $H$ of $\Aut(G) = \mathfrak{S}_5$; then $|H| = |V| = 10$. 
    There are exactly six subgroups of order 10 in $\mathfrak{S}_5$, and it is known that they are all conjugate. Thus, without loss of generality, we may assume
    \begin{align}
        H = \langle \sigma, \tau \rangle, \quad \sigma = (1\ 2\ 3\ 4\ 5), \quad \tau = (1\ 4)(2\ 3).
    \end{align}
    Considering the orbits of $\sigma$, they split into the outer and inner vertices of the graph:
    \begin{align}
        O_1 &= \{\{1,2\}, \{2,3\}, \{3,4\}, \{4,5\}, \{1,5\}\} \\
        O_2 &= \{\{1,3\}, \{2,4\}, \{3,5\}, \{1,4\}, \{2,5\}\}.
    \end{align}
    Therefore, for the action of $H$ on $V$ to be transitive, $\tau$ must map elements of $O_1$ to elements of $O_2$. However, since the action of $\tau$ is
    \begin{align}
        \{1,2\} \leftrightarrow \{3,4\}, \quad \{4,5\} \leftrightarrow \{1,5\}, \quad \{2,3\} \leftrightarrow \{2,3\},
    \end{align}
    $\tau$ maps elements of $O_1$ to $O_1$. This is a contradiction.
\end{proof}

\section{Construction of the Counterexample}

\subsection{Upho Posets Constructed from Graphs}

\begin{dfn}
    Let $G = (V,E)$ be a graph. For $v_0 \in V$, the set $P = P(G,v_0)$ is defined as
    \begin{align}
        P := \{ (v,n) \in V \times \N \mid  \text{there is a walk of length $n$ from $v_0$ to $v$} \}.
    \end{align}
    Furthermore, define a partial order $\leq$ on $P$ by
    \begin{align}
        (v,n) \leq (w,m) \Leftrightarrow n \leq m \text{ and there is a walk of length $m-n$ from $v$ to $w$}.
    \end{align}
    $P$ is a poset with $(v_0,0)$ as its minimum element.
\end{dfn}

\begin{ex}
    Let the graph $G = (V,E)$ be the complete graph $K_3$. That is,
    \begin{align}
        V = \{v_0, v_1, v_2\}, \quad E = \{\{v_0,v_1\}, \{v_0,v_2\}, \{v_1,v_2\}\}.
    \end{align}
    $P = P(G,v_0)$ becomes a poset with the following Hasse diagram.

    \begin{figure}[H]
        \centering
        \begin{tikzpicture}[
            vertex/.style={
                circle, 
                draw=blue!70!black, 
                fill=blue!5, 
                thick,
                minimum size=32pt, 
                inner sep=0pt, 
                font=\small
            },
            edge/.style={
                thick, 
                draw=gray!70!black
            }
        ]

            \node[vertex] (v00) at (0, 0) {$(v_0,0)$};

            \node[vertex] (v11) at (-2, 1.5) {$(v_1,1)$};
            \node[vertex] (v21) at (2, 1.5) {$(v_2,1)$};

            \node[vertex] (v12) at (-2, 3.0) {$(v_1,2)$};
            \node[vertex] (v02) at (0, 3.0) {$(v_0,2)$};
            \node[vertex] (v22) at (2, 3.0) {$(v_2,2)$};

            \node[vertex] (v13) at (-2, 4.5) {$(v_1,3)$};
            \node[vertex] (v03) at (0, 4.5) {$(v_0,3)$};
            \node[vertex] (v23) at (2, 4.5) {$(v_2,3)$};

            \node[vertex] (v14) at (-2, 6.0) {$(v_1,4)$};
            \node[vertex] (v04) at (0, 6.0) {$(v_0,4)$};
            \node[vertex] (v24) at (2, 6.0) {$(v_2,4)$};
            
            \node (dots1) at (-2, 7.5) {$\vdots$};
            \node (dots0) at (0, 7.5) {$\vdots$};
            \node (dots2) at (2, 7.5) {$\vdots$};

            \draw[edge] (v00) -- (v11);
            \draw[edge] (v00) -- (v21);

            \draw[edge] (v11) -- (v02);
            \draw[edge] (v11) -- (v22);
            \draw[edge] (v21) -- (v02);
            \draw[edge] (v21) -- (v12);

            \draw[edge] (v02) -- (v13);
            \draw[edge] (v02) -- (v23);
            \draw[edge] (v12) -- (v03);
            \draw[edge] (v12) -- (v23);
            \draw[edge] (v22) -- (v03);
            \draw[edge] (v22) -- (v13);

            \draw[edge] (v03) -- (v14);
            \draw[edge] (v03) -- (v24);
            \draw[edge] (v13) -- (v04);
            \draw[edge] (v13) -- (v24);
            \draw[edge] (v23) -- (v04);
            \draw[edge] (v23) -- (v14);
            
            \draw[edge, dashed] (v04) -- (dots1);
            \draw[edge, dashed] (v04) -- (dots2);
            \draw[edge, dashed] (v14) -- (dots0);
            \draw[edge, dashed] (v14) -- (dots2);
            \draw[edge, dashed] (v24) -- (dots0);
            \draw[edge, dashed] (v24) -- (dots1);

        \end{tikzpicture}
        \caption{The Hasse diagram of $P(K_3,v_0)$} \label{fig:K3_poset}
    \end{figure}
\end{ex}

\begin{rem}
    We can easily verify that for any graph $G$ and $(v,n), (w,m) \in P(G,v_0)$, the covering relation $\lessdot$ is characterized by
    \begin{align}
        (v,n) \lessdot (w,m) \Leftrightarrow m = n + 1 \text{ and } v \sim w.
    \end{align}
\end{rem}

\begin{prop}
    If $G$ is a vertex-transitive graph, then $P$ is a finite-type $\N$-graded upho poset.
\end{prop}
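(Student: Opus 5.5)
The proof has three parts: show that $P$ is a poset that is $\N$-graded with rank $\rho(v,n)=n$, show that it is finite-type, and construct for each $a\in P$ an explicit order isomorphism $P\isomto U(a)$ using an automorphism of $G$.

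\textbf{Poset axioms.} Reflexivity uses the walk of length $0$. For transitivity, concatenating a walk of length $m-n$ with a walk of length $k-m$ gives a walk of length $k-n$. For antisymmetry, $(v,n)\le(w,m)\le(v,n)$ forces $n=m$, and a walk of length $0$ means $v=w$. The element $(v_0,0)$ is the minimum, since $(v,n)\in P$ means exactly that there is a walk of length $n$ from $v_0$ to $v$.

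\textbf{Grading and finiteness.} Set $\rho(v,n)=n$. By the preceding remark, covers are exactly the pairs $(v,n)\lessdot(w,n+1)$ with $v\sim w$, so $\rho$ increases by $1$ along covers and $\rho(v_0,0)=0$. Any chain below $(v,n)$ strictly increases the second coordinate, so its length is at most $n$ and $\mathrm{ht}(v,n)\le n<\infty$; hence $P$ is finitary. (In fact the height is exactly $n$: a walk of length $n$ from $v_0$ to $v$ gives a saturated chain of length $n$.) Finally, $P_n\subseteq V\times\{n\}$ and $V$ is finite, so $P$ is finite-type.

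\textbf{Upho property.} Fix $a=(u,n)\in P$. By vertex-transitivity, choose $\sigma\in\Aut(G)$ with $\sigma(v_0)=u$, and define
\begin{align}
\psi_a\co P\to U(a),\qquad \psi_a(v,m)=(\sigma(v),m+n).
\end{align}
The main thing to verify is that this is a well-defined bijection onto $U(a)$; this is the only step that uses the hypothesis on $G$.

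\emph{Well-defined, with image in $U(a)$.} If $v_0=x_0\sim\cdots\sim x_m=v$ is a walk, then since $\sigma$ preserves adjacency, its image is a walk of length $m$ from $u$ to $\sigma(v)$. Prepending a walk of length $n$ from $v_0$ to $u$ shows $(\sigma(v),m+n)\in P$. The walk from $u$ itself shows $(\sigma(v),m+n)\ge a$.

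\emph{Bijectivity.} Injectivity is clear. For surjectivity, take $(w,k)\ge a$. Then $k\ge n$ and there is a walk of length $k-n$ from $u$ to $w$. Applying $\sigma^{-1}$ gives a walk of length $k-n$ from $v_0$ to $\sigma^{-1}(w)$, so $(\sigma^{-1}(w),k-n)\in P$, and it maps to $(w,k)$.

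\emph{Order isomorphism.} We have $(v,m)\le(v',m')$ if and only if $m\le m'$ and there is a walk of length $m'-m$ from $v$ to $v'$. Since $\sigma$ and $\sigma^{-1}$ carry walks to walks of the same length, this holds if and only if there is a walk of length $m'-m$ from $\sigma(v)$ to $\sigma(v')$, which is exactly $\psi_a(v,m)\le\psi_a(v',m')$ (the shift by $n$ cancels in the difference of second coordinates). Hence $\psi_a$ is an order isomorphism $P\isomto U(a)$, and $P$ is upho.
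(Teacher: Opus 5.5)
Your proof is correct and takes the same route as the paper: the order isomorphism $\psi_a(v,m)=(\sigma(v),m+n)$ comes from an automorphism $\sigma$ with $\sigma(v_0)=u$, the rank function is $\rho(v,n)=n$, finitariness follows from the bound on chain lengths, and finite type follows from finiteness of $V$. You also verify several points the paper only asserts: the poset axioms, that $\psi_a$ lands in $P$ and in $U(a)$, surjectivity via $\sigma^{-1}$, and that $\psi_a$ reflects as well as preserves the order.
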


\begin{proof}
    Take any $a = (u,n) \in P$. Since $G$ is vertex-transitive, there exists $\sigma \in \Aut(G)$ satisfying $\sigma(v_0) = u$. Then, the map
    \begin{align}
        \psi_a \co P \to U(a); \quad (v,m) \mapsto (\sigma(v),n+ m)
    \end{align}
    is an order isomorphism. $P$ is a finitary poset since any chain from $(v_0,0)$ to $a$ has length at most $n$.
    If we define $\rho \co P \to \N$ by $\rho(v,n) = n$, then $x \lessdot y$ implies $\rho(x) + 1 = \rho(y)$ and $\rho(v_0,0) = 0$, so $P$ is $\N$-graded. Finally,
    the set $P_n = \{ (v,n) \mid (v,n) \in P \}$ is finite since $V(G)$ is finite. Therefore, $P$ is a finite-type $\N$-graded upho poset.
\end{proof}

\begin{rem}
    If $G$ is vertex-transitive, $P(G,v_0)$ does not depend on the choice of $v_0$ up to order-isomorphism. 
\end{rem}

\begin{ex}
    Let $G = \mathrm{Cay}(H,S)$ be a Cayley graph. Then $P = P(G,e)$ is the following set:
    \begin{align}
        P = \{ (v,n) \in H \times \N \mid v \in S^n \},
    \end{align}
    and the partial order $\leq$ on $P$ is given by
    \begin{align}
        (v,n) \leq (w,m) \Leftrightarrow n \leq m \text{ and } v^{-1} w \in S^{m-n}.
    \end{align}
    Using the operation of $H$, we can simply define a product $*$ on $P(G,e)$ as follows:
    \begin{align}
        (v,n) * (w,m) := (vw, n + m).
    \end{align}
    This shows that $P$ is multiplicable when $G$ is a Cayley graph.
\end{ex}

\subsection{Construction of a Non-multiplicable Upho Poset}

In this subsection, we will prove the following theorem, which provides a counterexample to \cref{conj:main-finite-type}.
This is the main result of this paper.
\begin{thm}\label{thm:petersen-nonmultiplicable}
    Let $G = (V,E)$ be the Petersen graph. Then the upho poset $P = P(G,v_0)$ is not multiplicable.
\end{thm}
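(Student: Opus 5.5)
We argue by contradiction, following the outline in the introduction. Suppose $(P,*,e)$ is an LCIF monoid whose left-divisibility order is the order of $P$. For $a=(u,n)\in P$, left multiplication $\psi_a(t)=a*t$ is an order isomorphism $P\isomto U(a)$, so it preserves rank differences. Since every rank of $P$ has rank-preserving shifts, $\psi_a$ maps rank $m$ onto the elements of $U(a)$ of rank $n+m$. Hence we may write $a*(v,m)=(\phi_{a,m}(v),m+n)$.

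The first step is to compute the ranks. The Petersen graph is connected, non-bipartite and has odd girth $5$. Walks of every length $m\ge 2$ from $v_0$ then reach every vertex: length $2$ reaches $v_0$ and the six vertices at distance $2$, and length $3$ reaches the three neighbours and the six distance-$2$ vertices through $5$-cycles. I would check this by hand, or via $A^2=3I+(J-I-A)\cdot$ (the Petersen graph is strongly regular with parameters $(10,3,0,1)$). Likewise $U(a)$ meets every vertex in every rank $n+m$ with $m\ge2$. So for $m\ge 2$ the map $\phi_{a,m}$ is a bijection of $V$, and for $m\ge 2$ these $\phi_{a,m}$ are the objects to control.

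Next, $\psi_a$ preserves coverings in both directions. By the Remark on coverings, for $m\ge2$ this gives
\[
v\sim w \iff \phi_{a,m}(v)\sim\phi_{a,m+1}(w).
\]
The key rigidity lemma is: if $\phi,\psi$ are bijections of $V(G)$ with $v\sim w\iff \phi(v)\sim\psi(w)$, then $\phi=\psi$ and $\phi\in\Aut(G)$. I expect this to be the main obstacle. My plan is to argue as follows. The condition says $\psi$ carries $N(v)$ onto $N(\phi(v))$, so $\psi\phi^{-1}$ acts as an automorphism of the bipartite double cover $G\times K_2$ (the Desargues graph) preserving the sides. I would then show that such an automorphism is induced by an automorphism of $G$; equivalently, show that $\phi$ itself preserves adjacency and that $\psi=\phi$. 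A hands-on route is as follows. Adjacent $v,v'$ have no common neighbour, while non-adjacent ones have exactly one; the images $\psi(N(v))=N(\phi(v))$ must satisfy the same intersection pattern, so $\phi$ preserves adjacency. Then, since $G$ is twin-free and every edge lies in a $5$-cycle, we get $\psi=\phi$. If this combinatorial argument gets stuck, I would fall back on the fact that $\Aut$ of the Desargues graph is $\mathfrak S_5\times \Z_2$, so side-preserving automorphisms are exactly $\sigma\times\sigma$ with $\sigma\in \mathfrak S_5$.

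With the lemma, $\phi_{a,m}=\phi_{a,m+1}=:\phi_a\in\Aut(G)$ for all $m\ge2$. Associativity $(a*b)*t=a*(b*t)$, evaluated at $t$ of large rank, gives $\phi_{a*b}=\phi_a\circ\phi_b$. Also $a*(v,m)\ge a$ for large $m$, which pins $\phi_a$ down only through $a$, so we need to locate $\phi_a(v_0)$. Take $t=(v_0,m)$ for some $m\ge2$ and consider $(v,k)$ of small rank; applying $\psi_{(v,k)}$ at large ranks gives $\phi_{(v,k)}$. Consider $H=\{\phi_a\mid a\in P\}$, which is closed under composition and hence, being finite, a subgroup of $\Aut(G)$. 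For transitivity, use $a*(v_0,m)=(\phi_a(v_0),n+m)$ together with $(v_0,m)=(v_0,2)*\cdots$ decompositions, or more simply the following. Since $(u,n)*(w,m)$ ranges over all of $U((u,n))$, and every $(x,r)$ of large rank is $a*t$ with $a$ of rank $r-2$, the orbit $H\cdot v_0$ must be all of $V$. To get regularity, I would show $|H|\le 10$ by proving that $\phi_a$ is determined by $\phi_a(v_0)$. The idea is that if $\phi_a(v_0)=\phi_b(v_0)$ with $a,b$ of equal rank, then comparing $a*(v_0,m)$ and $b*(v_0,m)$ forces $a=b$ via left-cancellation-type arguments on rank-$n$ elements lying below. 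Possibly one needs first to show that $\phi_{a,0}$, meaning $a$ itself, equals $(\phi_a(v_0),n)$; this follows from $\phi_{a,m}$ being constant and $a*e=a$ extended down to small $m$ via coverings. Then $H$ is a regular subgroup, so by \cref{prop:sabidussi} $G$ is Cayley, contradicting \cref{prop:petersen_is_not_cayley}.
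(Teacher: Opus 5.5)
\textbf{The gap is in the regularity step.} You take $H=\{\phi_a\mid a\in P\}$ over \emph{all} ranks. Your injectivity argument only compares $a,b$ of equal rank, so at best it gives $|H_n|\le 10$ for each slice $H_n:=\{\phi_a\mid a\in P_n\}$. Two elements $(u,n)$ and $(u,n')$ with $n\neq n'$ both send $v_0$ to $u$ but can give different automorphisms. So $|H|\le 10$ does not follow, and it is false in general.

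Here is a concrete example. On $P(C_5,0)$ with $C_5=\mathrm{Cay}(\Z/5,\{\pm1\})$, the product $(v,n)*(w,m):=(v+(-1)^n w,\,n+m)$ is a compatible LCIF structure with $\phi_{(v,n)}(x)=v+(-1)^n x$. Then $\phi_{(0,0)}=\id$ and $\phi_{(0,5)}=(x\mapsto -x)$ agree at $0$. The set $\{\phi_a\}$ is the whole dihedral group of order $10$ acting on $5$ vertices: transitive but not regular. Since $\Aut(G)\cong\mathfrak S_5$ is itself transitive, a transitive $H$ gives no contradiction; you need a regular one.

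\textbf{The missing idea is to work inside a single rank.} For $n,m\ge N_0$ one has $P_n*P_m=P_{n+m}$, hence $H_nH_m=H_{n+m}$. The finite semigroup $\{H_n\}_{n\ge N_0}$ (under setwise product) has an idempotent $H_N=H_NH_N$, which is therefore a subgroup. It satisfies $|H_N|\le|P_N|=10$ automatically. It is transitive because $\phi_a(v_0)=u$ for $a=(u,N)$, coming from $a*(v_0,0)=a$.

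That last identity requires $\phi_{a,0}=\phi_a|_{V_0}$, which you only gesture at. The paper proves $\phi_{a,m}=\phi_a|_{V_m}$ by backward induction: for $v\in V_m$ one has $N(\phi_a(v))=\phi_a(N(v))\subseteq N(\phi_{a,m}(v))$, both sides have size $3$, and twin-freeness finishes. Your alternative via rank-$2$ factorizations would only show $V=H\cdot V_2$, not $H\cdot v_0=V$.

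\textbf{Correct the rank threshold.} As your own count shows, $|V_2|=7$ and $|V_3|=9$. So $V_m=V$, and $\phi_{a,m}$ is a bijection of $V$, only for $m\ge 4$; take $N_0=4$.

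\textbf{Your rigidity lemma proof is sound and differs from the paper's.} Common-neighbour counts $0$ versus $1$ force $\phi\in\Aut(G)$. Then $\theta=\phi^{-1}\psi$ satisfies $N(\theta(v))=N(v)$, so twin-freeness gives $\psi=\phi$; the $5$-cycle remark is unnecessary. Your argument is more elementary. The paper's spectral argument ($P-Q$ anticommutes with $A$, and the spectrum $3,1,-2$ has no $\pm\lambda$ pair) is what yields the generalization in \cref{rem:generalization}.
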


\begin{proof}
First we will show the following rigidity property of the Petersen graph.
\begin{lem}\label{lem:petersen-rigidity}
    If bijections between vertex sets
    \begin{align}
        \phi, \psi \co V \to V
    \end{align}
    satisfy $v \sim w \iff \phi(v) \sim \psi(w)$ for any $v,w \in V$, then $\phi = \psi$.
\end{lem}

\begin{proof}
    Let $A$ be the adjacency matrix of $G$. Since
    \begin{align}
        \forall v,w, \quad A_{vw} = A_{\phi(v)\psi(w)},
    \end{align}
    we define permutation matrices $P, Q$ as
    \begin{align}
        P_{vw} = \delta_{\phi(v)w}, \quad Q_{vw} = \delta_{\psi(v)w}.
    \end{align}
    Thus, $A = PAQ^\top$, giving $PA = AQ$. By symmetry, $QA = AP$ also holds, so
    \begin{align}
        (P-Q)A = A(Q-P).
    \end{align}
    Setting $B := P-Q$, we have $AB = -BA$. Therefore, for an eigenvector $x$ of $A$ with eigenvalue $\lambda$,
    \begin{align}
        ABx = -BAx = -\lambda Bx,
    \end{align}
    which implies that $Bx$ belongs to the eigenspace of $-\lambda$. On the other hand, the eigenvalues of the adjacency matrix $A$ of the Petersen graph are $3, 1^{(5)}, (-2)^{(4)}$, so $\lambda$ and $-\lambda$ are never both eigenvalues. Thus, $Bx=0$. Since $A$ is a real symmetric matrix and is diagonalizable, applying this reasoning to all eigenspaces yields $B = 0$. Therefore, $\phi = \psi$.
\end{proof}

We now return to the main proof. For each $n \in \N$, let $V_n$ be the set of vertices reachable from $v_0$ by a walk of length $n$, defined as
\begin{align}
    V_n := \{ v \in V \mid (v,n) \in P \}.
\end{align}
Since $G$ is a connected non-bipartite graph, there exists some $N_0 \in \N$ such that $V_m = V$ for $m \geq N_0$. 
Assume that a compatible LCIF monoid structure $*$ exists on $(P,\leq)$. Fix $a = (u,n) \in P$, and consider the left multiplication map
\begin{align}
    L_a \co P \to U(a); \quad x \mapsto a * x.
\end{align}
Since $L_a$ is an order isomorphism from $P$ onto $U(a)$ and sends the minimum
element to $a$, it sends elements of rank $m$ to elements of rank $m+n$. 
Thus, for each $m \in \N$, there exists a map $\phi_{a,m} \co V_m \to V$ such that it can be written as
\begin{align}
    L_a(v,m) = (\phi_{a,m}(v), n + m).
\end{align}
Let us observe that $\phi_{a,m}$ is independent of $m$.

\begin{lem}
    There exists an automorphism $\phi_a \in \Aut(G)$ such that $\phi_{a,m} = \phi_a|_{V_m}$ for all $m \in \N$.
\end{lem}

\begin{proof}
    Since $L_a$ is bijective, for $a \in P, m \in \N$,
    \begin{align}
        \phi_{a,m}(v) = \phi_{a,m}(w) \iff L_a(v,m) = L_a(w,m) \iff (v,m) = (w,m),
    \end{align}
    which implies that $\phi_{a,m}$ is injective. In particular, when $m \geq N_0$, $\phi_{a,m}$ is bijective. 
    Moreover, for any $v\in V_m$ and $w\in V_{m+1}$, we have
    \begin{align}
        v \sim w &\iff (v,m) \lessdot (w,m+1) \\
        &\iff L_a(v,m) \lessdot L_a(w,m+1) \\
        &\iff (\phi_{a,m}(v), n + m) \lessdot (\phi_{a,m+1}(w), n + m + 1) \\
        &\iff \phi_{a,m}(v) \sim \phi_{a,m+1}(w).
    \end{align}
    In particular, when $m \geq N_0$,
    \begin{align}
        \forall v,w \in V, \quad v \sim w \iff \phi_{a,m}(v) \sim \phi_{a,m+1}(w),
    \end{align}
    so by \cref{lem:petersen-rigidity}, we have
    \begin{align}
        \phi_{a,N_0} = \phi_{a,N_0+1} = \cdots
    \end{align}
    Let this be denoted as $\phi_a$. Then, for any $v,w \in V$,
    \begin{align}
        v \sim w \iff \phi_a(v) \sim \phi_a(w),
    \end{align}
    implying $\phi_a \in \Aut(G)$.
    
    Next, we show by induction on $m$ that for each $0 \leq m < N_0$, $\phi_{a,m}$ is the restriction of $\phi_a$. 
    Assume $\phi_{a,m+1} = \phi_a|_{V_{m+1}}$. For any $v \in V_m$ and $w \in N_G(v) \subset V_{m+1}$, since
    \begin{align}
        \phi_{a,m}(v) \sim \phi_{a,m+1}(w) = \phi_a(w),
    \end{align}
    we have 
    \begin{align}
        \phi_a(N_G(v)) \subset N_G(\phi_{a,m}(v)).
    \end{align}
    On the other hand, since $\phi_a$ is a graph automorphism, 
    \begin{align}
        \phi_a(N_G(v)) = N_G(\phi_a(v)).
    \end{align}
    Therefore,
    \begin{align}
        N_G(\phi_a(v)) \subset N_G(\phi_{a,m}(v)),
    \end{align}
    and since both sides have the same number of elements (namely 3), the equality $N_G(\phi_a(v)) = N_G(\phi_{a,m}(v))$ holds. Since $G$ is twin-free, we obtain $\phi_{a,m}(v) = \phi_a(v)$. 
    Thus, by backward induction, we obtain $\phi_{a,m} = \phi_a|_{V_m}$ for $m = N_0-1,N_0-2,\dots,0$ as well.
\end{proof}

From the above, we see that the operation $*$ can be written using $\{\phi_a\}_{a\in P} \subset \Aut(G)$ as
\begin{align}
    a * (v,m) = (\phi_a(v), n + m).
\end{align}
By the associativity of $*$,
\begin{align}
    L_{a * b} = L_a \circ L_b \quad \therefore \phi_{a * b} = \phi_a \circ \phi_b.
\end{align}
The minimum element $(v_0,0)$ of $P$ is the identity element, so $\phi_{(v_0,0)} = \id$. Thus,
\begin{align}
    (P, *) \to (\Aut(G), \circ), \quad a \mapsto \phi_a
\end{align}
is a monoid homomorphism. Let $P_n = V_n \times \{n\}$ and let $m,n \geq N_0$. By the bijectivity of $\phi_a$, we have $a * P_m = P_{n+m}$ for any $a \in P_n$, which leads to
\begin{align}
    P_n * P_m = P_{n+m}.
\end{align}
Therefore, if we define $H_n = \{\phi_a \in \Aut(G) \mid a \in P_n\}$, then
\begin{align}
    H_n H_m = H_{n+m}.
\end{align}
Thus, $\{ H_n \mid n \geq N_0 \}$ forms a semigroup. Since a non-empty finite semigroup has an idempotent element, there exists some $N \geq N_0$ such that
\begin{align}
    H_N H_N = H_N.
\end{align}
Since a non-empty subsemigroup of a finite group is a subgroup, $H_N$ is a subgroup of $\Aut(G)$. Furthermore, for each $a = (v,N) \in P_N$, since
\begin{align}
    a * (v_0,0) = (v,N),
\end{align}
we get
\begin{align}
    \phi_a(v_0) = v.
\end{align}
Therefore, the action of $H_N$ on $V$ is transitive. Moreover, since $|H_N| \leq |V_N| = |V|$ by definition, the action is regular. 
Then \cref{prop:sabidussi} implies that $G$ is a Cayley graph, which contradicts \cref{prop:petersen_is_not_cayley}. 
From the above, we conclude that $P$ is not multiplicable. 
\end{proof}

\begin{rem}\label{rem:generalization}
    Similarly, if a graph $G$ satisfies the following conditions, then $P(G,v_0)$ is not multiplicable.
    \begin{enumerate}
        \item $G$ is a connected vertex-transitive graph.
        \item For any $\lambda \in \R$, $\lambda$ and $-\lambda$ are never both eigenvalues of the adjacency matrix of $G$, in particular, $0$ is not an eigenvalue.
        \item $G$ is not a Cayley graph.
    \end{enumerate}
    In fact, the properties of being twin-free and non-bipartite can be deduced from (2).
\end{rem}

\begin{ex}
    Let $k \ge 2$ be an integer. The Kneser graph $KG(2k+1, k)$ (also known as the Odd graph $O_{k+1}$) is defined as
    \begin{align}
        V = \binom{[2k+1]}{k}, \quad u \sim v \iff u \cap v = \emptyset.
    \end{align}
    This graph satisfies all the conditions required to form a non-multiplicable upho poset $P(KG(2k+1, k), v_0)$:
    \begin{enumerate}
        \item $KG(2k+1, k)$ is a connected vertex-transitive graph.
        \item The eigenvalues of the adjacency matrix of $KG(2k+1, k)$ are explicitly given by $\lambda_j = (-1)^j \binom{(2k+1)-k-j}{k-j} = (-1)^j (k - j + 1)$ for $j = 0, 1, \dots, k$. Since their absolute values $\{1, 2, \dots, k+1\}$ are all distinct, $\lambda$ and $-\lambda$ are never both eigenvalues.
        \item It is a well-known result by Godsil \cite{God80} that $KG(2k+1, k)$ is not a Cayley graph for $k \ge 2$.
    \end{enumerate}
    Note that when $k=2$, $KG(5,2)$ is exactly the Petersen graph.
\end{ex}

\section{Further Discussions}

\subsection{Multiplicability of the Line Graph of the Petersen Graph} \label{sec:line-graph}

Let $X := L(G)$ be the line graph of the Petersen graph $G$.
It is a non-Cayley vertex-transitive graph; see \cite[Theorem~3.6]{LS16}.
\begin{align}
    V(X) = E(G), \quad E(X) = \{\{e,f\} \subset E(G) \mid e \cap f \neq \emptyset\}.
\end{align}
Surprisingly, the upho poset $P(X,v_0)$ is multiplicable.

\begin{thm}\label{thm:line-graph-multiplicable}
    Let $X := L(G)$ be the line graph of the Petersen graph, and let $P := P(X, v_0)$. Then, $P$ is multiplicable.
    One such monoid structure can be described explicitly. First, we label the vertices of $X$ as follows. Here, $ij|kl$ denotes the vertex $\{\{i,j\}, \{k,l\}\}$ of $X$.
    \begin{align}
        v_{0} &= 15|23,\quad 
        v_{1} = 15|24,\quad 
        v_{2} = 12|35,\quad 
        v_{3} = 14|35,\quad 
        v_{4} = 12|45,\quad \\
        v_{5} &= 13|24,\quad 
        v_{6} = 25|13,\quad 
        v_{7} = 14|23,\quad 
        v_{8} = 34|15,\quad 
        v_{9} = 45|23,\quad \\
        v_{10} &= 34|25,\quad 
        v_{11} = 45|13,\quad 
        v_{12} = 35|24,\quad 
        v_{13} = 25|14,\quad 
        v_{14} = 12|34
    \end{align}
    Let $e_a, e_b, e_c, e_d$ be the atoms of $P$:
    \begin{align}
        e_a = (v_1,1),\ e_b = (v_9,1),\ e_c = (v_7,1),\ e_d=(v_8,1) \in P.
    \end{align}
    Then, the left multiplication by $e_s$ for $s \in \{a,b,c,d\}$ is given by
    \begin{align}
        e_s * (v_i,m) = \begin{cases}
            (v_{\sigma_s(i)}, m+1), & m \text{ is even} \\
            (v_{\tau_s(i)}, m+1), & m \text{ is odd}
        \end{cases}
    \end{align}
    Here, the permutations $\sigma_s,\tau_s$ are defined by the following table:
    \begin{table}[H]\centering
        \resizebox{\textwidth}{!}{
        \begin{tabular}{c|ccccccccccccccc}
            & 0 & 1 & 2 & 3 & 4 & 5 & 6 & 7 & 8 & 9 & 10 & 11 & 12 & 13 & 14 \\
            \hline
            $\sigma_a$ & 1 & 9 & 4 & 6 & 10 & 5 & 13 & 14 & 2 & 11 & 8 & 0 & 3 & 12 & 7 \\
            $\tau_a$ & 4 & 12 & 13 & 2 & 9 & 7 & 1 & 5 & 0 & 8 & 3 & 6 & 11 & 10 & 14 \\
            $\sigma_b$ & 9 & 6 & 12 & 11 & 0 & 1 & 10 & 13 & 8 & 14 & 5 & 3 & 7 & 2 & 4 \\
            $\tau_b$ & 10 & 0 & 9 & 3 & 2 & 13 & 12 & 4 & 11 & 7 & 14 & 8 & 5 & 6 & 1 \\
            $\sigma_c$ & 7 & 4 & 14 & 12 & 1 & 3 & 5 & 8 & 10 & 2 & 0 & 9 & 6 & 13 & 11 \\
            $\tau_c$ & 14 & 13 & 5 & 10 & 4 & 11 & 7 & 3 & 9 & 0 & 6 & 12 & 2 & 1 & 8 \\
            $\sigma_d$ & 8 & 5 & 3 & 1 & 14 & 2 & 11 & 6 & 4 & 9 & 12 & 13 & 10 & 7 & 0 \\
            $\tau_d$ & 11 & 14 & 8 & 13 & 7 & 6 & 3 & 0 & 1 & 10 & 9 & 4 & 12 & 5 & 2 \\
        \end{tabular}
        }
        \caption{Permutations defining the monoid structure on $P(L(G),v_0)$.} \label{tab:line-graph-permutations}
    \end{table}
\end{thm}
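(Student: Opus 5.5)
The plan is to construct the monoid explicitly and then check the LCIF axioms and that its left-divisibility order is $\leq$. By \cref{prop:regular_multiplicable} it is equivalent to produce a coloring. The cleanest route is to realise $P$ as a quotient of the free monoid on $\{a,b,c,d\}$.

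\textbf{Step 1: parity-dependent automaton.} For a word $s_1 s_2\cdots s_k$ in $\{a,b,c,d\}$ and a state $(v_i,m)$, define the action of $e_s$ by the rule in the statement: apply $\sigma_s$ if $m$ is even and $\tau_s$ if $m$ is odd. The first thing to check, by finite computation, is the adjacency condition
\begin{align}
  v_i \sim v_j \iff v_{\sigma_s(i)} \sim v_{\tau_t(j)} \quad\text{and}\quad v_i\sim v_j \iff v_{\tau_s(i)} \sim v_{\sigma_t(j)}
\end{align}
for all $s,t$. I would in fact aim for the stronger statement that $(\sigma_s,\tau_s)$ are the even- and odd-level parts of one permutation of the bipartite double cover $X\times K_2$ (30 vertices), preserving that graph. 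Granting this, left multiplication by $e_s$ maps $P_m$ bijectively to $P_{m+1}$ and sends covers to covers. Here one needs $V_m$ to stabilise; the line graph of the Petersen graph contains triangles, so $V_m=V(X)$ for $m\ge 2$, while $V_0$ and $V_1$ are handled directly. Finally, $e_s*(v_0,0)=e_s$ must hold, i.e.\ $\sigma_a(0)=1$, $\sigma_b(0)=9$, $\sigma_c(0)=7$, $\sigma_d(0)=8$, which the table confirms.

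\textbf{Step 2: the group and the product.} Let $\Gamma\le \Aut(X\times K_2)$ be generated by the maps $g_s=(\sigma_s,\tau_s)$, which swap the two levels. Let $\Gamma^+$ be the level-preserving subgroup generated by the products $g_sg_t$. I would show that the elements of $\Gamma$ obtained from words of length $n$ act on the level-$0$ copy of $v_0$ so that $w\mapsto w\cdot(v_0,0)$ gives a \emph{bijection} $\Gamma_n\to V_n\times\{n\}$ for every $n$. Concretely, $\Gamma$ should act regularly on the $30$ vertices of the double cover, or at least regularly on the relevant reachable set. This is the precise analogue of the Cayley situation, where the regular group lives on the double cover rather than on $X$ itself, and it explains why the non-Cayleyness of $X$ is no obstruction. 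With this in hand, define $(v,n)*(w,m)$ by taking the unique $\gamma\in\Gamma_n$ with $\gamma(v_0,0)=(v,n)$ and setting the product equal to $\gamma(w,m)$, where $\gamma$ acts on $(w,m)$ through the parity of $m$. Associativity follows from composition in $\Gamma$. Left cancellation follows because $\gamma$ is a bijection. Invertible-freeness follows from the grading.

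\textbf{Step 3: order compatibility.} It remains to show that $(v,n)*P=U((v,n))$. Since $\gamma$ maps covers to covers (Step 1) and the rank count matches, $\gamma$ maps $P$ into $U((v,n))$. It is injective, and on each rank it is a bijection $V_m\to V_{n+m}$ once $V_{n+m}=V(X)$. For small ranks one checks by hand that $\gamma$ sends $V_m$ onto the appropriate level set of walks from $v$. The inverse direction then follows because covers are reflected as well.

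\textbf{Main obstacle.} The hard part is Step 2. One has to verify that the group generated by these eight permutations has the right order, namely $30$ on the double cover, equivalently $|\Gamma^+|=15$. One also has to verify that this group is consistent with the walk reachability sets, so that words of length $n$ that land on the same vertex induce the same map. I would first compute $\Gamma^+$ by machine or by hand from the table, check that it has order $15$ (a plausible candidate being $\Z/15$), and check that it acts regularly on $V(X)$ at each level. If it turns out larger, the fallback is to verify the coloring conditions of \cref{prop:regular_multiplicable} directly by a finite check on ranks $\le 3$, extended using periodicity in the parity of $m$.
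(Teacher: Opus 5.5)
Your overall architecture matches the paper's. You lift to the bipartite double cover $\widehat X$, regard $(\sigma_s,\tau_s)$ as level-swapping automorphisms $\gamma_s$, identify the products of length $n$ with $P_n$ via $\gamma\mapsto\gamma(v_0,0)$, and transport composition.

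\textbf{The gap.} It lies in how you propose to establish that identification. Your main plan is to show $|\Gamma|=30$, $|\Gamma^+|=15$ and a regular action on each level. This is false for this table, and the table refutes it directly. Since $\tau_b(\sigma_a(0))=\tau_b(1)=0$ and $\tau_a(\sigma_d(0))=\tau_a(8)=0$, both $\gamma_b\gamma_a$ and $\gamma_a\gamma_d$ lie in $K:=\operatorname{Stab}_\Gamma(v_0,0)$. If $K$ were trivial you would get $\gamma_b=\gamma_a^{-1}=\gamma_d$, contradicting $\sigma_b(0)=9\neq 8=\sigma_d(0)$. So $K\neq 1$; with the paper's counts one finds $|\Gamma|=60$ and $|K|=2$. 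The picture ``the regular group lives on the double cover'' is therefore not what happens here.

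What is actually needed is only the condition you mention in passing. Write $A=\{\gamma_a,\gamma_b,\gamma_c,\gamma_d\}$. The orbit map must be injective on each slice $A^n$, i.e.\ $(A^n)^{-1}A^n\cap K=\{\id\}$. Surjectivity $A^n(v_0,0)=V_n\times\{n \bmod 2\}$ is automatic, because the $\gamma_s$ are automorphisms sending $(v_0,0)$ onto its four neighbours. Already at $n=2$ injectivity is a genuine constraint. The four words $ba,ad,cb,dc$ all return to $(v_0,0)$, so they must coincide in $\Gamma$, and their common value is then the nontrivial element $k$ of $K$.

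Without regularity you have no finite reduction of these infinitely many injectivity conditions, and your fallback uses the wrong one. The paper computes $A^n$ for $n\le 7$ and finds $|A^n|=1,4,13,15,15,15,15$ for $n\le 6$ and $A^7=A^3$. Hence $A^{n+4}=A^n$ for $n\ge 3$, and it checks the injectivity condition for $n\le 6$. The period is $4$, not $2$. From $A^8=A^4$, $|A^5|=15$ and $K\neq 1$ one gets that $A^4$ is a subgroup of order $15$ with $\Gamma/A^4\cong\Z/4$. So for $n\ge 3$ the representatives of $(v,n)$ and $(v,n+2)$ in $\Gamma$ differ by $k$. Since $k(v_1,1)=(v_7,1)$, the edge coloured $e_a$ at $(v,n+2)$ runs parallel to the edge coloured $e_c$ at $(v,n)$. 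Thus the colouring induced by this monoid is not $2$-periodic in rank, and ``check ranks $\le 3$, extend by parity'' cannot certify it.

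\textbf{Minor slips.}
\begin{enumerate}
\item $V_2\neq V(X)$: since $|P_2|=13$, stabilization only starts at $m=3$.
\item The adjacency identity is needed, and true, only for $s=t$. For $s\neq t$ it would force $\tau_s=\tau_t$, because $X$ is twin-free.
\end{enumerate}

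Once the slice bijections are established, your Steps 2--3 go through. Step 3 needs no hand check: $\gamma\in\Aut(\widehat X)$ carries walks from $(v_0,0)$ bijectively onto walks from $\gamma(v_0,0)$, so the image of left multiplication is exactly $U((v,n))$.
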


\begin{proof}
    We construct the multiplication using the canonical bipartite double cover of $X$. Let $\widehat{X} = X \times \Z/2$ be the bipartite graph defined as follows:
    \begin{align}
        V(\widehat{X}) &= V(X) \times \Z/2, \\
        E(\widehat{X}) &= \{{(v,\epsilon), (w,1-\epsilon)} \mid v \sim w \text{ in } X, \epsilon \in \Z/2\}.
    \end{align}
    For each $s\in\{a,b,c,d\}$, let $\sigma_s,\tau_s \in \mathfrak{S}_{15}$ be the permutations displayed in the table above. First, observe that
    \begin{align}
        v_i \sim v_j \iff v_{\sigma_s(i)}\sim v_{\tau_s(j)} \iff v_{\tau_s(i)}\sim v_{\sigma_s(j)} \quad \text{for } i,j \in \{0,1,\dots,14\}.
    \end{align}
    Since it preserves the adjacency relation, we can define an automorphism $\gamma_s \in \Aut(\widehat{X})$ by
    \begin{align}
        \gamma_s(v_i, 0) &= (v_{\sigma_s(i)}, 1), \quad \gamma_s(v_i, 1) = (v_{\tau_s(i)}, 0).
    \end{align}
    Moreover, by the choice of the four elements covering $(v_0,0)$, we have
    \begin{align}
        \gamma_a(v_0, 0)&=(v_1,1) = e_a,\quad
        \gamma_b(v_0, 0)=(v_9,1) = e_b, \\
        \gamma_c(v_0, 0)&=(v_7,1) = e_c,\quad
        \gamma_d(v_0, 0)=(v_8,1) = e_d.
    \end{align}
    Let $\Gamma$ be the group generated by $\gamma_a,\gamma_b,\gamma_c,\gamma_d$:
    \begin{align}
        \Gamma := \langle \gamma_a,\gamma_b,\gamma_c,\gamma_d\rangle \leq \Aut(\widehat X).
    \end{align}
    We define a monoid
    \begin{align}
        M := \left\langle (\gamma_a,1),(\gamma_b,1),(\gamma_c,1),(\gamma_d,1)\right\rangle \subset \Gamma \times \N
    \end{align}
    with the multiplication given by
    \begin{align}
        (\gamma, m) \cdot (\gamma', m') := (\gamma \circ \gamma', m + m').
    \end{align}
    The identity element is $(\id_{\widehat{X}},0)$.

    Since $\Gamma$ is a group, the monoid $\Gamma \times \N$ is cancellative on both sides. Therefore, its submonoid $M$ is also cancellative on both sides. Moreover, if $(\gamma, m) \cdot (\gamma', m') = (\id_{\widehat{X}}, 0)$, then $m + m' = 0$, which yields $m = m' = 0$. Since $(\gamma, 0), (\gamma', 0) \in M$, we must have $\gamma = \gamma' = \id_{\widehat{X}}$. Thus, $M$ is invertible-free.

    For $n \in \N$, let $M_n := M \cap (\Gamma \times \{n\})$ and $P_n := P \cap (V(X) \times \{n\})$. For a word $w = s_1 s_2 \cdots s_n$ with $s_k \in \{a,b,c,d\}$, we denote 
    \begin{align}
        \gamma_w := \gamma_{s_1} \circ \gamma_{s_2} \circ \cdots \circ \gamma_{s_n} \in \Gamma.
    \end{align}
    Since each $\gamma_s$ toggles the second coordinate of $V(\widehat{X})$, we can write
    \begin{align}
        \gamma_w(v_0, 0) = (v_i, n \bmod 2).
    \end{align}
    Let $\Phi_n \co M_n \to P_n$ be the map defined by 
    \begin{align}
        \Phi_n(\gamma_w, n) := (v_i,n).
    \end{align}
    Define $\Phi := \bigsqcup_{n \geq 0} \Phi_n \co M \to P$. We claim that each $\Phi_n$ is bijective. 

    \begin{lem}
        For each $n \in \N$, the map $\Phi_n \co M_n \to P_n$ is a bijection.
    \end{lem}

    \begin{proof}
        Let $A := \{\gamma_a, \gamma_b, \gamma_c, \gamma_d\} \subset \Gamma$. Then, $M_n = A^n \times \{n\}$. By explicit computation for $n = 0,1,\dots,6$, the cardinality of $A^n$ is:
        \begin{align}
            |A^0| &= 1,\ |A^1| = 4,\ |A^2| = 13,\ |A^3| = |A^4| = |A^5| = |A^6| = 15
        \end{align}
        Furthermore, $A^7 = A^3$ holds. Thus, $A^n = A^{n+4}$ for $n \geq 3$, which implies:
        \begin{align}
            |M_0| &= 1, \quad |M_1| = 4, \quad |M_2| = 13, \quad |M_n| = 15 \text{ for } n \geq 3.
        \end{align}
        On the other hand, we can also compute $|P_n|$ for $n = 0,1,2,3$ as follows:
        \begin{align}
            |P_0| &= 1, \quad |P_1| = 4, \quad |P_2| = 13, \quad |P_n| = |V(X)| = 15 \text{ for } n \geq 3.
        \end{align}
        This shows that $|M_n| = |P_n|$ for all $n \in \N$. Therefore, it suffices to prove that $\Phi_n$ is injective for each $n$.

        Let $K := \operatorname{Stab}_\Gamma(v_0, 0)$. Suppose $(\gamma,n), (\delta,n) \in M_n$ satisfy $\Phi_n(\gamma,n) = \Phi_n(\delta,n)$. Then $\gamma(v_0, 0) = \delta(v_0, 0)$, which means $\delta^{-1}\gamma \in K$. 
        
        Since $A^7=A^3$, we have $A^{n+4}=A^n$ for all $n\geq 3$.
        Hence it is enough to verify the following condition for $n=0,1,\dots,6$:
        \begin{align}
            (A^n)^{-1}A^n\cap K=\{\id_{\widehat X}\}.
        \end{align}
        This finite verification proves the injectivity of $\Phi_n$ for all $n$.
    \end{proof}

    Let $x=(\gamma,n)\in M_n$ and suppose $\Phi(x)=(v_i,n)$. For any $s\in\{a,b,c,d\}$, we have
    \begin{align}
        x \cdot (\gamma_s,1)=(\gamma\gamma_s,n+1).
    \end{align}
    Since $\gamma_s(v_0, 0)\in N_{\widehat{X}}(v_0, 0)$ and $\gamma$ is an automorphism of $\widehat{X}$, it follows that
    \begin{align}
        \gamma \gamma_s (v_0, 0) \in N_{\widehat{X}}(\gamma (v_0, 0)).
    \end{align}
    Therefore, $\Phi(x)\lessdot \Phi(x \cdot (\gamma_s,1))$. Thus, $\Phi$ maps the covering relations of $M$ to those of $P$.

    Conversely, since $X=L(G)$ is a 4-regular graph, each element in $P$ covers exactly 4 elements. Meanwhile, the covers of $x$ in $M$ are exactly the 4 elements $x \cdot (\gamma_s,1)$ for $s \in \{a,b,c,d\}$, which are distinct due to left-cancellativity. 
    Since both orders are the transitive closures of their covering relations and
    $\Phi$ is rank-preserving and bijective, this implies that $\Phi$ is an order isomorphism.

    Finally, if we define the product $*$ on $P$ by
    \begin{align}
        p * q := \Phi(\Phi^{-1}(p) \cdot \Phi^{-1}(q)),
    \end{align}
    then $(P,*)$ becomes an LCIF monoid isomorphic to $M$, establishing a multiplicability on $P$. Furthermore, for $(v_i,m) \in P$, writing $\Phi^{-1}((v_i,m)) = (\gamma, m)$, we obtain:
    \begin{align}
        e_s * (v_i,m) &= \Phi((\gamma_s,1) \cdot (\gamma, m)) \\
        &= \Phi(\gamma_s \gamma, m+1).
    \end{align}
    Since
    \begin{align}
        \gamma_s \gamma(v_0, 0) = \gamma_s(v_i, m \bmod 2) = \begin{cases}
            (v_{\sigma_s(i)}, 1), & m \text{ is even} \\
            (v_{\tau_s(i)}, 0), & m \text{ is odd}
        \end{cases}
    \end{align}
    it precisely follows that
    \begin{align}
        e_s * (v_i,m) = \begin{cases}
            (v_{\sigma_s(i)}, m+1), & m \text{ is even} \\
            (v_{\tau_s(i)}, m+1), & m \text{ is odd}
        \end{cases}
    \end{align}
    completing the proof.
\end{proof}

\begin{rem}
    The eigenvalues of the adjacency matrix of $L(G)$ are 
    \begin{align}
        4, 2^{(5)}, (-1)^{(4)}, (-2)^{(5)}
    \end{align}
    so $L(G)$ does not satisfy the condition (2) mentioned in \cref{rem:generalization}. 
    Indeed, the permutations $\sigma_s$ and $\tau_s$ defined above show that \cref{lem:petersen-rigidity} does not hold for $L(G)$.
\end{rem}

\begin{rem}
    As an abstract monoid, $M$ is isomorphic to the monoid generated by $a,b,c,d$ subject to the following 31 relations:
    \begin{align}
        ba &= ad,\ cb = ad,\ dc = ad,\ bbc = abd,\ bbd = aaa,\ bca = aab,\\
        bcc &= acd,\ bcd = abb,\ bdb = aac,\ bdd = acc,\ caa = acc,\ cab = abc,\\
        cac &= aaa,\ cad = ada,\ cca = bbb,\ ccc = aca,\ ccd = aab,\ cda = acd,\\
        cdb &= abd,\ cdd = bda,\ daa = abc,\ dab = bda,\ dac = aab,\ dad = adb,\\
        dbb &= acc,\ dbc = abb,\ dbd = bbb,\ dda = abd,\ ddb = aca,\ ddd = aac,\\
        bbbb &= aacd
    \end{align}
\end{rem}

\subsection{A monoid with the same rank-generating function}

The Petersen example shows that the walk poset $P(G,v_0)$ itself is not
multiplicable. However, this does not rule out the possibility that its
rank-generating function is realized by a multiplicable upho poset, or
equivalently by a finitely generated LCH monoid.

Indeed, the rank-generating function of $P(G,v_0)$ is
\begin{align}
    F_P(t)=1+3t+7t^2+9t^3+10t^4+10t^5+\cdots.
\end{align}
The following example shows that this series is realized by a finitely
generated LCH monoid.

\begin{ex}\label{ex:same-generating-function}
    Let $M$ be the monoid generated by $a,b,c$ subject to the following relations:
    \begin{align}
        ba &= ab, &
        bb &= aa, &
        bca &= acb, &
        bcb &= aca, \\
        caa &= aab, &
        cab &= aaa, &
        cac &= abc, &
        cbc &= aac, \\
        cca &= acb, &
        ccb &= aca, &
        cccc &= bccc.
    \end{align}
    A finite rewriting verification shows that $M$ is left-cancellative and homogeneous, and that its elements have normal forms
    \begin{align}
        a^k v \qquad (k\in\N,\ v\in V),
    \end{align}
    where
    \begin{align}
        V=\{e,b,c,bc,ca,cb,cc,bcc,ccc,bccc\}.
    \end{align}
    Consequently,
    \begin{align}
        \sum_{n\geq 0}|M_n|t^n = 1+3t+7t^2+9t^3+10t^4+10t^5+\cdots = F_P(t).
    \end{align}
\end{ex}

\subsection{Open problems}

The Petersen example shows that not every finite-type $\N$-graded upho poset
is multiplicable. Hence the regularity conjecture of Fu--Peng--Zhang fails
at the level of posets. Nevertheless, two weaker or restricted forms of
multiplicability remain plausible.

First, our counterexample is not a lattice. It is therefore natural to ask
whether the regularity phenomenon survives in the lattice setting.

\begin{conj}\label{conj:upho-lattice-multiplicable}
    Every finitary upho lattice, and in particular every finite-type $\N$-graded upho lattice, is multiplicable.
\end{conj}

This conjecture is deeply connected to the classification theory of upho lattices via the concept of ``\textbf{cores}'' introduced by Hopkins \cite{Hop25} and Hopkins and Lewis \cite{HL26}.
In their theory, the core of an upho lattice is a finite graded lattice that completely determines its rank-generating function.
A particularly striking consequence of their framework is that, for any given core $L$, there can exist only a finite number of multiplicable upho lattices. 
While it currently remains an open question whether the total number of general upho lattices associated with a given core is finite, \cref{conj:upho-lattice-multiplicable}, if true, would elegantly resolve this problem. 
Specifically, it would immediately imply that the structural classification of upho lattices is entirely governed by the finite behavior of their cores.

Second, even if a finite-type $\N$-graded upho poset is not multiplicable
itself, its rank-generating function may still be realized by a multiplicable
upho poset, or equivalently by a finitely generated LCH monoid, as in \cref{ex:same-generating-function}. 

\begin{conj}\label{conj:upho-rank-series-monoid}
    Let $P$ be a finite-type $\N$-graded upho poset. Then there exists
    a finitely generated LCH monoid $M$ such that
    \begin{align}
        |P_n| = |M_n| \quad \text{for all } n \in \N.
    \end{align}
    Here $P_n$ denotes the set of elements of rank $n$ in $P$, and $M_n$ denotes the set of elements of length $n$ in $M$.
\end{conj}

This suggests the following broader enumerative problem.

\begin{ques}\label{ques:rank-sequence-constraints}
    Which sequences
    \begin{align}
        1 = r_0, r_1, r_2, \ldots
    \end{align}
    occur as the rank sequence
    \begin{align}
        r_n = |P_n|
    \end{align}
    of a finite-type $\N$-graded upho poset, and which of these can be realized by a finitely generated LCH monoid?
\end{ques}

Finally, our construction raises the following graph-theoretic problem.

\begin{ques}\label{ques:graph-characterization}
    For which finite vertex-transitive graphs $G$ is the walk poset $P(G,v_0)$ multiplicable?
\end{ques}

The Petersen graph gives a negative example, whereas the line graph of the
Petersen graph gives a positive example. Thus non-Cayleyness of $G$ alone
does not determine the multiplicability of $P(G,v_0)$.

\appendix

\section{Z3 search and finite certificates}
\label{app:z3-search}

This appendix describes the finite Z3 searches used to find some of the explicit data appearing in the paper. 
The search procedures themselves are not part of the conceptual proofs.  Once the finite data are obtained, the
arguments in the main text use them as certificates: the required properties are reduced to explicit finite checks.

We used the Z3 SMT solver~\cite{Z3} in three places.  First, we searched for compatible partial LCIF multiplication tables on finite truncations of walk posets.
Second, from a satisfying multiplication table for the line graph of the
Petersen graph, we extracted the explicit data used in
\cref{thm:line-graph-multiplicable}.  Third, we used a similar search
to find the monoid in \cref{ex:same-generating-function}.

\subsection{Encoding finite truncations of walk posets}

We encode finite truncations of walk posets as follows.
Let $P_{\leq H}$ be the truncation of a walk poset up to rank $H$.
We introduce an auxiliary symbol $\bot$, which represents products leaving
the truncation.  The unknown multiplication is encoded by a function
\begin{align}
    M:(P_{\leq H}\cup\{\bot\})^2\to P_{\leq H}\cup\{\bot\}.
\end{align}
The constraints are:
\begin{enumerate}
    \item $M(\bot,x)=M(x,\bot)=\bot$ for all $x$;
    \item $M(e,x)=M(x,e)=x$ for all $x$;
    \item If $x$ has rank $<H$, then $\{M(x,a)\mid a\in A\}=\{y\in P_{\leq H}\mid x\lessdot y\}$, where $A$ is the set of atoms.  If $x$ has rank $H$, then $M(x,a)=\bot$ for all $a\in A$.
    \item associativity is imposed whenever the rightmost factor is an atom; $M(x,M(y,a))=M(M(x,y),a)$ for all $x,y$ and atoms $a$;
    \item $M(x,y_1)=M(x,y_2)\neq \bot$ implies $y_1=y_2$ for all $x,y_1,y_2$.
\end{enumerate}
We denote this finite SMT instance by $\Phi_H$.

\subsection{Search for the line graph multiplication}
\label{app:line-graph-search}

For the line graph $L(G)$ of the Petersen graph, the above SMT instance
with $H=4$ was satisfiable.  From a satisfying model, we extracted a partial
multiplication table.  We then converted this table into the explicit finite
data used in \cref{sec:line-graph}: the generators correspond to the
atoms, and the products by generators determine the required maps.

The explicit data displayed in \cref{tab:line-graph-permutations} should
therefore be regarded as a certificate found by Z3.  The proof of
\cref{thm:line-graph-multiplicable} does not depend on the search
procedure; it only uses the finite identities verified from this table.

\subsection{Search for a monoid with the same rank-generating function}
\label{app:same-gf-search}

We also used a finite-search strategy to look for homogeneous presentations
whose initial degree sequence begins with
\begin{align}
    1,3,7,9,10,10,\ldots.
\end{align}
A satisfying model was converted into a finite presentation by recording
relations whenever two words represented the same element in the model.
This procedure led to the presentation in \cref{ex:same-generating-function}.

\subsection{The Petersen obstruction}
\label{app:petersen-z3}

For the Petersen graph itself, the instance $\Phi_4$ was unsatisfiable.
Since any compatible LCIF monoid structure on the full walk poset would
restrict to a satisfying assignment of $\Phi_H$ for every $H$, this gives
a finite obstruction to multiplicability.  This computation agrees with the
theoretical proof of \cref{thm:petersen-nonmultiplicable}.

\section*{Acknowledgements}

The author is deeply indebted to Hidetaka Inagaki for his substantial contribution to this work. He not only inspired this project by independently arriving at the conjecture but also carefully read an earlier draft and pointed out a critical error in the proof, saving the author from a serious mistake.
The author also thanks Yui Ushiro and Manato Shimizu for collecting examples that provided crucial insights.  
The author is grateful to Masahiro Hachimori for his valuable comments on the draft, which significantly improved the consistency with the previous literatures.
Finally, the author used Gemini for preliminary brainstorming. All mathematical arguments, computations, and final writing were checked by the author.

\end{document}